\newlength{\wdth}
\newif\ifhide
\newtheorem{Theorem}{Theorem}
\newtheorem{Lemma}[Theorem]{Lemma}
\newtheorem{remark}[Theorem]{Remark}
\newtheorem{definition}[Theorem]{Definition}
\newtheorem{Assumption}[Theorem]{Assumption}
\newtheorem{Corollary}[Theorem]{Corollary}
\newtheorem{Proposition}[Theorem]{Proposition}
\title{{\normalsize\tt\hfill\jobname.tex}\\
On consistency of Bayesian parameter estimations for a class of ergodic Markov models}
\author{A.I. Nurieva\footnote{National Research University Higher School of Economics, email: ai\_nurieva@mail.ru}, 
A.Yu. Veretennikov\footnote{Institute for Information 
Transmission Problems, email: ayv@iitp.ru}
}
\begin{document}

\maketitle
		
\begin{abstract}
The consistency of the Bayesian estimation of a parameter is shown for a class of ergodic discrete Markov chains. 
J.L. Doob's method was used, 
offered earlier 
for the i.i.d. situation. 
The result may be useful in the 
reliability theory for models with unknown parameters, 
in the risk management in financial mathematics, 
and in other applications. 

\end{abstract}

{\bf Keywords:} Bayesian estimator; consistency; ergodic Markov chain

\smallskip

{\bf MSC2020:}   	60F15; 62F12; 62F15; 62M05


\section{Introduction}
Parameter estimation plays a significant and in some cases possibly even a crucial role in quite a few applications such as the reliability theory for models with unknown parameters, see \cite[chapter 3]{GBS}, in the Extreme Values theory for Markov processes, 
in the risk management in financial mathematics, et al. In the asymptotic sense, one of the basic desirable properties of any estimator in the long run is its consistency, weak or strong, as it shows that the estimation is close to the ``true'' parameter if the classical setting is accepted. Similarly, in the Bayesian setting consistency means literally the same -- convergence to the sample value of the parameter, even though there is no such thing as a ``true parameter value''  because it is to be sampled from the prior distribution. Also, as it is well-known, Bayesian estimators often work well in the classical setting, too, assuming some fictitious prior distribution for the parameter is chosen. 

In this paper the problem of strong consistency is  tackled for a certain class of Markov models in the Bayesian setting, and, as was already mentioned, in the classical situation with a fixed nonrandom ``true'' parameter value. Assume that there is a family of distributions $\{\mathbb P^\theta \}$ parameterised by some variable $\theta \in \Theta$, where $\Theta \subset R^m$ is a given parametric space. Any estimator is a measurable function of the observations, or, a bit more generally, a mapping from the space of outcomes $\Omega$, say, to the space $(\mathbb R^m, {\cal B}(\mathbb R^m))$ which is Borel measurable with respect to the sigma-algebra of the observations ${\cal F}^X$; here ${\cal B}(\mathbb R^m))$ is the Borel sigma-algebra in $\mathbb R^m$. 

In the Bayesian setting it is assumed that there is some prior distribution for $\theta$ on the set $\Theta$; the latter is usually a topological space, and in this paper, it will be assumed that $\Theta$ is a domain in $\mathbb R^m$ which is not necessarily bounded. S.N. Bernstein and R. von Mises were the first to establish consistency and the first steps towards the asymptotic normality of the Bayesian estimator for some particular i.i.d. cases, see \cite[Chapter IV, p.271]{Bern}, \cite[pp. 188-192]{vonMises}. The general theory about asymptotic normality was developed later by Le Cam \cite{LeCam} and Ibragimov and Khasmisnky \cite{IbrKhasm}; for more recent results see, for example, \cite{Kutoyants}, \cite{SpokoyniyPanov}. Another direction related to the problem was asymptotic singularity of measures for large observation samples based on martingale theory and developed in \cite{KLSh1, KLSh2, KLSh3, LPSh, Yashin}, et al. Naturally asymptotic normality requires more restrictive assumptions. On the other hand, ``just'' consistency may often be used for constructions of more efficient estimations by certain modifications. Also, in a situation where the conditions for asymptotic normality are not met, it may be even more desirable to know whether the applied estimator is consistent. Hence, it makes sense to separate the studies of sufficiency conditions for both properties, asymptotic normality and consistency. 

In this paper the approach offered for the i.i.d. observations in \cite{Doob} is used, adjusted for a class of markovian models. An important point in \cite{Doob} was a Strong Law of Large Numbers for the sample distribution functions (d.f. in what follows) as the number of observations tends to infinity. Also essential was an assumption that theoretical d.f. are different for different parameters. In this paper discrete densities on a  finite or countable state space are used. This restriction looks not crucial and likely may be  relaxed. At the level of ideas, the most close to this study is the paper \cite{Yashin}, where the earlier basic results from \cite{KLSh1, KLSh2, KLSh3} are applied precisely to the problem of parameter estimators' consistency. However, formally conditions in for this property \cite{Yashin} and in what follows are different. Also, in a way, this paper is based on a more simple background than that in \cite{KLSh1, KLSh2, KLSh3, Yashin}.

The paper consists of this Introduction, The setting, Auxiliary lemmata, Main result (theorem \ref{thm1}), and Proof of theorem \ref{thm1}.

\section{The setting}
Let$\{ X_t \}$ be a homogeneous Markov chain (MC) in discrete time $T=\{0,1,\ldots \}$ with a finite or countable (denumerable) state space $\mathcal{X} \subset {\mathbb R}^1$ (it will be clear in what follows why it is convenient to work on ${\mathbb R}^1$: although it is not a restriction, but it may be desirable that the elements of the state space are linearly ordered). The transition probabilities are denoted by $p_{ij}(s,t) = \mathbb P(X_t = j | X_s = i)) = \mathbb P(X_{t-s} = j | X_0 = i))=p_{ij}(t-s) $ for $s\le t$, and let  ${\cal P}(t) = (p_{ij} (t))$ be the transition probability matrix over time $t$; furthermore, they will all depend on a parameter $\theta$. The notion of ergodicity of a MC is not uniquely determined in the literature; in the present paper we understand it as follows. 

\begin{definition}\label{def1}
A homogeneous MC $(X_n, n  = 0,1,\ldots)$ is called ergodic if there exists a limiting invariant probability measure $\mu$ which does not depend on the initial distribution -- say, $\mu_0$ -- and to which there is a convergence in total variation for each $\mu_0$:
\begin{equation}\label{erg}
    \lim_{t\to\infty} \|p_{\mu_0,\cdot}(t) - \mu_\cdot\|_{TV} = 0,
\end{equation}
where $p_{\mu_0,j}(t) = \mathbb P_{\mu_0}(X_t=j)$. 
Recall that the total variation metric, or distance is given by the formula 
$$
\|\mu - \nu\|_{TV} := 2\sup_{A\in {\cal F}(\mathcal{X})}(\mu(A)-\nu(A)).
$$
\end{definition}
As it was said, the transition probabilities depend on a parameter and the problem under consideration is estimation of this parameter given observations on the time interval $[1,n]$ where $n\to\infty$. It is assumed that $\theta \in \Theta \subset \mathbb{R}^m$; $\Theta$ is a  domain, not necessarily bounded. Naturally, a stationary measure, generally speaking, also depends on $\theta$: denote it from now on by $\mu^\theta(dx)$ and note that under the assumption of convergence (\ref{erg}) it is necessarily unique. We will need the extended process $Y_n = (X_n,X_{n+1})$ which is also a MC on the state space $\mathcal{X}\times \mathcal{X}$.
The symbol $\mu^\theta(dx, dx')$ 
will denote the stationary measure for the MC $(Y_n)$; it is easy to see that such an invariant measure does exist. Assume that the functions $p^\theta(\cdot,\cdot)$ are Borel measurable with respect to the variable $\theta$. Then due to the ergodicity (see (\ref{erg})) the invariant probabilities are also Borel measurable in $\theta$. Following Doob's approach, {\it suppose} that a (weak) Law of Large Numbers (LLN) holds true for the MC $(X_n)$ with respect to the corresponding measure $\mathbb P^\theta$, for each  $\theta$ . In this case, LLN is also valid for the MC $(Y_n)$, where $Y_n:=(X_n, X_{n+1})$. It is easy to see that these two conditions -- LLN for the MC $(X_n)$ and for the MC $(Y_n)$ -- are equivalent. Hence, the following assumption will be accepted in what follows. 
\begin{Assumption}\label{def2}
It is assumed that for each $\theta\in \Theta$ and any measurable $A,B$ a convergence holds true, 
$$
 \left|\frac1{T} \sum_{s=0}^{T-1} 1(X_s\in A, X_{s+1}\in B) -  \mu^\theta(A\times B)\right| \stackrel{\mathbb P^\theta}\to 0, \quad T\to\infty.
$$
\end{Assumption}
\noindent
This assumption is equivalent also to the condition 
$$
 \left|\frac1{T} \sum_{s=0}^{T-1} g(Y_s) -  \int g(y)\mu^\theta(dy)\right| \stackrel{\mathbb P^\theta}\to 0, \quad T\to\infty,
$$
for any bounded measurable function $g(y)$, where  $y=(x,x')$. 

 \medskip

Let us collect the comments made earlier in the form of a proposition. 
\begin{Proposition} \label{claim1}
Under the assumptions made above the following statements hold: 
\begin{enumerate}
\item If $(X_n)$ is a homogeneous MC then $Y_n$ is also a homogeneous MC. 
\item If the MC $(X_n)$ is ergodic then the MC $(Y_n)$ is also ergodic, and vice versa. 

\end{enumerate}
\end{Proposition}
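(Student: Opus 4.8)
The plan is to handle the two items separately, both by direct elementary computation. For item~1 I would verify the Markov property of $(Y_n)$ by hand: fixing $n$, note that $(Y_0,\dots,Y_n)$ is a deterministic function of $(X_0,\dots,X_{n+1})$ and write $Y_{n+1}=(X_{n+1},X_{n+2})$; its first coordinate $X_{n+1}$ is the second coordinate of $Y_n$ and hence $(Y_0,\dots,Y_n)$-measurable, while by the Markov property of $(X_n)$ the conditional law of $X_{n+2}$ given $(X_0,\dots,X_{n+1})$ depends only on $X_{n+1}$. Therefore the conditional law of $Y_{n+1}$ given $(Y_0,\dots,Y_n)$ depends only on $Y_n$, with one-step transition probabilities $P^\theta_Y\big((i,j),(k,l)\big)=\mathbf 1(k=j)\,p^\theta_{jl}$; since this expression does not depend on $n$, the chain $(Y_n)$ is homogeneous. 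That is all that item~1 requires.

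For item~2, assume first that $(X_n)$ is ergodic with invariant measure $\mu^\theta$, and (abusing notation as in Section~2) write $\mu^\theta$ also for the measure on $\mathcal X\times\mathcal X$ with weights $\mu^\theta(\{i\}\times\{j\})=\mu^\theta_i\,p^\theta_{ij}$. I would first check that this measure is invariant for $(Y_n)$: summing $\mu^\theta_i p^\theta_{ij}\,P^\theta_Y((i,j),(k,l))$ over $(i,j)$, the sum over $j$ is collapsed by the factor $\mathbf 1(k=j)$ and what remains is $p^\theta_{kl}\sum_i\mu^\theta_i p^\theta_{ik}=p^\theta_{kl}\,\mu^\theta_k$ by invariance of $\mu^\theta$ for $(X_n)$, i.e. exactly $\mu^\theta_k p^\theta_{kl}$. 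Next, for an arbitrary initial law $\nu_0$ of $Y_0$, the key point is that from time $1$ on the first-coordinate process $X_1,X_2,\dots$ is the $(X_n)$-chain launched at time $1$ from the second marginal of $\nu_0$, whence $\mathbb P_{\nu_0}(Y_t=(i,j))=\mathbb P_{\nu_0}(X_t=i)\,p^\theta_{ij}$ for every $t\ge1$. Since $\sum_j p^\theta_{ij}=1$, the discrete form of the total variation distance gives, for $t\ge1$,
$$
\|\mathbb P_{\nu_0}(Y_t\in\cdot)-\mu^\theta\|_{TV}
=\sum_{i,j}p^\theta_{ij}\,\big|\mathbb P_{\nu_0}(X_t=i)-\mu^\theta_i\big|
=\|\mathbb P_{\nu_0}(X_t\in\cdot)-\mu^\theta\|_{TV},
$$
and the right-hand side tends to $0$ as $t\to\infty$ by \eqref{erg}; hence $(Y_n)$ is ergodic.

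For the converse, suppose $(Y_n)$ is ergodic with its (necessarily unique) invariant measure $\pi$, and let $\mu_0$ be any initial law of $X_0$. Choosing $\nu_0(\{i\}\times\{j\}):=\mu_0(i)\,p^\theta_{ij}$ for the law of $Y_0=(X_0,X_1)$, the first-coordinate process under $\mathbb P_{\nu_0}$ is precisely $(X_n)$ started from $\mu_0$; projecting the convergence $\|\mathbb P_{\nu_0}(Y_t\in\cdot)-\pi\|_{TV}\to0$ onto the first coordinate (a pushforward never increases total variation) gives $\|\mathbb P_{\mu_0}(X_t\in\cdot)-\pi^{(1)}\|_{TV}\to0$, where $\pi^{(1)}$ is the first marginal of $\pi$ and does not depend on $\mu_0$. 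It remains to see that $\pi^{(1)}$ is invariant for $(X_n)$, and this follows from the same algebra run backwards: the $(Y_n)$-invariance relation $\sum_i\pi(i,k)\,p^\theta_{kl}=\pi(k,l)$ shows $\pi(k,l)=\pi^{(2)}(k)\,p^\theta_{kl}$ with $\pi^{(2)}$ the second marginal; summing over $l$ yields $\pi^{(1)}=\pi^{(2)}$, and then summing over $k$ yields $\pi^{(1)}(l)=\sum_k\pi^{(1)}(k)\,p^\theta_{kl}$. Hence $(X_n)$ is ergodic with limiting measure $\pi^{(1)}$.

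I do not anticipate a real obstacle here; the proof is essentially bookkeeping. The one place that requires care is that Definition~\ref{def1} quantifies over \emph{all} initial distributions, so the two reductions above must accommodate an arbitrary law of $Y_0$ (respectively of $X_0$), not only the ``consistent'' ones of the form $\mu_0(i)\,p^\theta_{ij}$ --- which they do, because from time $1$ onward the first coordinate of $(Y_n)$ is a genuine copy of the $(X_n)$-chain, and conversely each law of $X_0$ induces such a consistent law of $Y_0$.
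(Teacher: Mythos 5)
Your proof is correct, and it is in fact more than the paper provides: the paper states Proposition \ref{claim1} without proof, merely ``collecting the comments made earlier'' and asserting that the claims are easy to see. Your explicit computation of the kernel $P_Y\bigl((i,j),(k,l)\bigr)=\mathbf 1(k=j)\,p^\theta_{jl}$, the identification of the invariant law $\mu^\theta_i p^\theta_{ij}$, and the reduction of total-variation convergence for $(Y_n)$ to that for $(X_n)$ (in both directions, with the correct handling of arbitrary, possibly ``inconsistent,'' initial laws of $Y_0$) is exactly the verification the authors leave implicit, and it checks out.
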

Note that, as usual, all sigma-algebras in the text are regarded as completed with respect to the corresponding probability measures.

\section{Main result}
The Bayesian setting assumes that the parameter $\theta$ is random; let it have a prior probability  distribution $\mathbb Q$ on $\Theta$. 
Recall that here $\Theta$ is a domain in ${\mathbb R}^m$, not necessarily bounded. 
It is assumed that 
\begin{equation}\label{eq3-1a}
\mathbb E\theta < \infty. 
\end{equation}
Any estimator of the parameter given observations is represented  by some Borel measurable function $\hat\theta_n = \hat\theta_n (X_1, \ldots, X_n)$. As it is well-known (cf., for example, \cite[chapter 19]{Borovkov1}), there exists a Borel measurable function $\phi_n$ such that the Bayesian estimator reads, 
$$
\mathbb E(\theta |X_1, \ldots, X_n) \stackrel{\text{$\mathbb P^\theta$-a.s.}}= \phi_n(X_1, \ldots, X_n). 
$$
So, the statistic $\hat \theta_n:=\phi_n(X_1,\dots, X_n) = \mathbb E(\theta|X_1,\dots, X_n)$ is necessarily $({\cal F}^X_N, {\cal B}({\mathbb R}^m))$-measurable; hence, also $({\cal F}^X_\infty, {\cal B}({\mathbb R}^m))$-measurable, and $\phi_n$ is measurable with respect to the pair of $\sigma$-algebras $({\cal B}({\cal X})^{n}, {\cal B}({\mathbb R}^m)), \forall n \in \mathbb N$, 
where ${\cal B}({\cal X})$ is the set of all subsets of the state space $\cal X$, that is, ${\cal B}({\cal X})= 2^{\cal X}$. Recall that a pointwise limit of measurable functions is also measurable.

\begin{Theorem}\label{thm1}
Let the following conditions be satisfied:
\begin{enumerate}
    
\item  
Transition probability matrices of the MC $(X_n)$ for different values of $\theta$ are different,
that is, for any $\theta\not = \theta'$ there exist $i,j$ such that $p^\theta_{ij} \not = p^{\theta'}_{ij}$.

\item 
Let MC $(Y_n)$ be ergodic for each $\theta$  under the measure $\mathbb P^\theta$ in the sense of the definition (\ref{def1}), and let the (weak) LLN hold for the process $Y$ for each $\theta$  in the sense of the assumption (\ref{def2}). 
Then there is a 
convergence 
\begin{equation} \label{formula3}
\hat\theta_n 
\stackrel{\mbox{}}{\to} \theta, \quad n\to\infty, \quad \mathbb P-\text{a.s.}
\end{equation}
\end{enumerate}

\end{Theorem}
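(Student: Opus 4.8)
The plan is to reproduce Doob's martingale argument in the Markov setting, reducing the claim to a measurability statement. Work under the Bayesian mixture $\mathbb{P}(\cdot)=\int_\Theta\mathbb{P}^\vartheta(\cdot)\,\mathbb{Q}(d\vartheta)$, under which $\theta$ has law $\mathbb{Q}$ and, conditionally on $\theta$, the chain $(X_n)$ has law $\mathbb{P}^\theta$; put $\mathcal{F}_n^X=\sigma(X_1,\dots,X_n)$ and $\mathcal{F}_\infty^X=\sigma(X_1,X_2,\dots)$. Since $\theta$ is integrable by \eqref{eq3-1a}, $\hat\theta_n=\mathbb{E}(\theta\mid\mathcal{F}_n^X)$ is a uniformly integrable (Doob) martingale, hence converges $\mathbb{P}$-a.s.\ and in $L^1$ to $\mathbb{E}(\theta\mid\mathcal{F}_\infty^X)$ by L\'evy's upward theorem. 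Thus \eqref{formula3} reduces to showing that $\theta$ is $\mathcal{F}_\infty^X$-measurable up to $\mathbb{P}$-null sets, i.e.\ that $\mathbb{E}(\theta\mid\mathcal{F}_\infty^X)=\theta$ $\mathbb{P}$-a.s.

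For this I would invoke Assumption \ref{def2}. Fix $i,j\in\mathcal{X}$, set $Z_T^{ij}:=\frac1T\sum_{s=1}^{T}1(X_s=i,\,X_{s+1}=j)$, an $\mathcal{F}_\infty^X$-measurable random variable, and $g_{ij}(\theta):=\mu^\theta(\{i\}\times\{j\})$, a Borel function of $\theta$ by the measurability of the invariant measures in $\theta$ recorded after Definition \ref{def1}. Assumption \ref{def2}, with the sum started at $s=1$ rather than $s=0$ (which alters the average by at most $1/T$), gives $Z_T^{ij}\to g_{ij}(\vartheta)$ in $\mathbb{P}^\vartheta$-probability for each fixed $\vartheta$. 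Conditioning on $\theta$ and integrating the probabilities $\mathbb{P}^\vartheta(|Z_T^{ij}-g_{ij}(\vartheta)|>\varepsilon)$, which are bounded by $1$ and tend to $0$ pointwise in $\vartheta$, against $\mathbb{Q}$, dominated convergence yields $Z_T^{ij}-g_{ij}(\theta)\to0$ in $\mathbb{P}$-probability. Extracting a $\mathbb{P}$-a.s.\ convergent subsequence then shows that $\omega\mapsto g_{ij}(\theta(\omega))$ is $\mathcal{F}_\infty^X$-measurable (the $\sigma$-algebras being $\mathbb{P}$-complete), hence so is the $[0,1]^{\mathcal{X}\times\mathcal{X}}$-valued map $G:=(g_{ij}(\theta))_{i,j\in\mathcal{X}}$, as $\mathcal{X}$ is countable.

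It remains to recover $\theta$ from $G$. Summing over $j$ gives the stationary probabilities $\mu^\theta_i=\sum_j g_{ij}(\theta)$ of $(X_n)$, whence $p^\theta_{ij}=g_{ij}(\theta)/\mu^\theta_i$ wherever $\mu^\theta_i>0$, so $G$ essentially encodes the transition matrix; together with condition~1 --- and the auxiliary lemmata, which exclude the degenerate possibility that distinct parameters share the same $\mu^\vartheta(\cdot\times\cdot)$ --- this makes the Borel map $\Psi:\Theta\to[0,1]^{\mathcal{X}\times\mathcal{X}}$, $\vartheta\mapsto(\mu^\vartheta(\{i\}\times\{j\}))_{i,j}$, injective. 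Since $\Theta\subset\mathbb{R}^m$ and the countable product $[0,1]^{\mathcal{X}\times\mathcal{X}}$ are Polish spaces, the Lusin--Souslin theorem makes $\Psi(\Theta)$ a Borel set and $\Psi^{-1}$ Borel on it (the linear order on $\mathcal{X}\subset\mathbb{R}^1$ could instead be used to exhibit such an inverse explicitly). Therefore $\theta=\Psi^{-1}(G)$ is $\mathcal{F}_\infty^X$-measurable up to $\mathbb{P}$-null sets, so $\mathbb{E}(\theta\mid\mathcal{F}_\infty^X)=\theta$ and, by the first paragraph, $\hat\theta_n\to\theta$ $\mathbb{P}$-a.s., which is \eqref{formula3}.

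The main obstacle is the middle step. As only the weak LLN is assumed, the empirical pair-frequencies converge merely in probability, so $g_{ij}(\theta)$ cannot be identified with a pathwise limit such as $\limsup_T Z_T^{ij}$; one must pass to subsequences and, more delicately, transfer correctly from the individual laws $\mathbb{P}^\vartheta$ (for which the LLN is postulated) to the mixture $\mathbb{P}$, under which $\theta$ is itself random --- this is the dominated-convergence passage above. A secondary delicate point is identifiability: condition~1 is phrased for the transition matrices, so the auxiliary lemmata are needed to turn it into injectivity of $\vartheta\mapsto\mu^\vartheta(\cdot\times\cdot)$, after which the existence of a Borel measurable inverse is a standard descriptive-set-theoretic fact.
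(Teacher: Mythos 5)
Your proposal is correct and follows essentially the same route as the paper's own proof: L\'evy's upward martingale theorem reduces the claim to the $\mathcal{F}^X_\infty$-measurability of $\theta$, which is then obtained from the weak LLN for pair-frequencies, the injectivity of $\theta\mapsto\mu^\theta(\cdot\times\cdot)$ guaranteed by condition~1, and a Lusin--Souslin-type theorem giving a Borel inverse (the paper cites Kanovei--Lyubetsky for this and embeds the invariant law in the Polish space of bivariate distribution functions with the Prokhorov metric, where you use the equally valid $[0,1]^{\mathcal{X}\times\mathcal{X}}$). Your explicit dominated-convergence passage from $\mathbb{P}^\vartheta$-convergence to $\mathbb{P}$-convergence followed by subsequence extraction is a step the paper treats more tersely, but the argument is the same.
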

Here, as usual in the Bayesian setting, 
$$
\mathbb P(d\theta, d\omega) = \mathbb Q(d\theta) \mathbb P^\theta(d\omega).
$$
\noindent
\begin{remark}
Recall that similar results under different conditions were established in \cite[theorems 1-2]{Yashin}. Formally, those conditions in \cite{Yashin} may be applicable, or not  applicable in our situation because the assumption of the absolute continuity for the projection measures on the sigma-algebra ${\cal F}^X_n$ for any two values of the parameter is not assumed, see   \cite[Theorem 1, condition (C)]{Yashin} and  \cite[Theorem 2, condition (b)]{Yashin}. In markovian examples in \cite[\S13]{KLSh2} a similar condition to \cite[Theorem 1, condition (C)]{Yashin} was  assumed as well, see  theorem 22, condition (b). 
In the present paper such a condition is neither assumed, nor it follows from the other assumptions. Intuitively, the lack of continuity should only help consistency; nevertheless, even if so, it apparently does require some calculus. In any case, the proof of the theorem \ref{thm1} in what follows does not distinguish between the cases tackled in \cite{Yashin} and the cases not covered by this cited paper.
\end{remark}

\begin{remark}
As in the setting of Doob in \cite{Doob}, this result may also be  used in the classical setting where $\theta$ is not random and there exists a unique ``true'' parameter value. For that, an artificial prior density should be introduced on $\Theta$ which must be everywhere positive. Then, as in \cite{Doob}, the analogous assertion will hold true about an almost sure convergence of the artificial Bayesian estimator under the product  measure on $\Theta \times {\mathcal X}^\infty$. 

In particular, what is usually highlighted about Bernstein and von Mises theorem is that if the measure $\mathbb Q$ has a densty $q(\theta)$ which is everywhere positive, then convergence of the Bayesian estimator towards $\theta$ will take place almost everywhere in $\Theta$ with respect to the Lebesgue measure. Actually, it suffices for this property that the measure $\mathbb Q$ were absolutely continuous with respect to the latter.  
However, in either case there is no way to know for which particular values of $\theta$ this convergence is valid and for which maybe not; it may only be claimed that the set of ``bad'' values of $\theta$ with no convergence has measure zero.

\end{remark}

\section{Auxiliary results}
Let us define the sample distribution function 
$$ 
F_N (x, x'):=
\frac1{N}\sum_{t=0}^{N-1} 1(X_t \le x, X_{t+1} \le x'). 
$$ 
Denote by ${\cal S}=\{F(x,x'), x,x'\in \mathbb R\}$ the space of all functions of two variables $(x,x')$ with the following properties: 

\begin{enumerate}
 \item
 $0\le F(x,x')\le 1$ for each $x,x'\in \mathbb R$. 
 \item
If $x\le z,\, x'\le z'$, then $F(x,x')\le F(z,z')$ (monotonicity). 
\item For each $x,x'\in R$
\[
\lim_{z\downarrow x, z'\downarrow x'} F(z,z') = F(x,x'). 
\]

\item
For each $x,x'\in \mathbb R$ there exists a limit 
\[
\lim_{z\uparrow x, z'\uparrow x'} F(z,z') =: F(x,x')_-. 
\]
(NB: Actually, the latter notation will not be used in what follows; it is just an analogue of the one-dimensional property of ``l\`ag'' -- possessing  ``limites \`a gauche'' -- for the one-dimensional case. Respectively, the property 3 is the analogue of the ``c\`ad'' -- being  ``continue \`a droite'' for a function of one variable.)
 \item
\[
\lim_{z\uparrow +\infty, z'\uparrow  +\infty} F(z,z') = 1.
\] 
\item
\[
\lim_{z\downarrow -\infty, z'\downarrow  -\infty} F(z,z') = 0.
\] 
\end{enumerate}

In fact, in the situation under the consideration we deal with some proper subset of all distribution functions of two variables, because all  corresponding measures on $\mathbb R^2$ have atoms in our setting. However, all we need is that this more general space of distribution functions  with a certain metric is a Polish space, and this will be guaranteed by proposition \ref{pro9} in what follows. 

\medskip

Denote by $\Sigma({\cal S})$ the sigma-algebra on ${\cal S}$ generated by all finite cylinders, i.e., 
$$
\Sigma({\cal S}) := \sigma(F \in {\cal S}: \, F(x_1, x_1')\le a_1, \ldots F(x_n, x_n')\le a_n))
$$
for any  $(x_1,x'_1), \ldots, (x_n,x'_n) \in \mathbb R^2$ and $a_1,\ldots, a_n \in [0,1]$. 

\smallskip

Note that the distribution function of any two-dimensional random vector belongs to the space ${\cal S}$, and all sample d.f. $F_N$ belong to this space, too.  

\begin{Lemma} \label{lem5}
The function $F_N: \, \Omega \mapsto \mathbb R$ is a measurable map with respect to the corresponding pair of sigma-algebras $({\cal F}_N^X; \Sigma(S))$.

\end{Lemma}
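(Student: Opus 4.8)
The plan is to reduce measurability into the space $(\mathcal{S},\Sigma(\mathcal{S}))$ to an elementary statement about real-valued functions of $\omega$, using the standard fact that a map into a measurable space whose $\sigma$-algebra is generated by a family $\mathcal{G}$ of sets is measurable as soon as the preimage of every member of $\mathcal{G}$ lies in the source $\sigma$-algebra. Here $\Sigma(\mathcal{S})$ is, by definition, generated by the finite cylinders $\{F\in\mathcal{S}:\,F(x_1,x_1')\le a_1,\dots,F(x_n,x_n')\le a_n\}$, and each such cylinder is the finite intersection $\bigcap_{k=1}^{n}\{F\in\mathcal{S}:\,F(x_k,x_k')\le a_k\}$ of the ``one-point'' cylinders $C_{x,x',a}:=\{F\in\mathcal{S}:\,F(x,x')\le a\}$. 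Since $\mathcal{F}_N^X$ is closed under finite intersections, it suffices to prove that $F_N^{-1}(C_{x,x',a})\in\mathcal{F}_N^X$ for every $(x,x')\in\mathbb{R}^2$ and every $a\in[0,1]$.

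First I would unwind the definition of the sample distribution function: for fixed $(x,x')$ and $a$,
$$
F_N^{-1}(C_{x,x',a})=\Bigl\{\omega\in\Omega:\ \tfrac1N\sum_{t=0}^{N-1}1(X_t(\omega)\le x,\,X_{t+1}(\omega)\le x')\le a\Bigr\},
$$
so everything comes down to the $\mathcal{F}_N^X$-measurability of the real-valued map $\omega\mapsto F_N(x,x')$ for each fixed pair $(x,x')$. Now each coordinate $X_t$ with $0\le t\le N$ is $\mathcal{F}_N^X$-measurable --- this is exactly what it means for $\mathcal{F}_N^X$ to be the $\sigma$-algebra of the observations, and note that $F_N$ only involves $X_0,\dots,X_N$. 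Hence for each $t$ the event $\{X_t\le x,\ X_{t+1}\le x'\}=\{X_t\le x\}\cap\{X_{t+1}\le x'\}$ belongs to $\mathcal{F}_N^X$, its indicator is an $\mathcal{F}_N^X$-measurable function, and a finite linear combination of measurable functions is measurable; therefore $\omega\mapsto F_N(x,x')$ is $\mathcal{F}_N^X$-measurable and, in particular, the sublevel set displayed above lies in $\mathcal{F}_N^X$. Combined with the reduction of the previous paragraph, this proves the lemma. (Completeness of the $\sigma$-algebras, adopted throughout the paper, only enlarges $\mathcal{F}_N^X$ and so does not affect the argument.)

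There is essentially no obstacle here; the only point that needs a word of care is the bookkeeping in the first paragraph, namely the passage from the generating class of finite cylinders to the one-point cylinders $C_{x,x',a}$ (immediate, as finite cylinders are finite intersections of one-point cylinders and conversely, so the two classes generate the same $\sigma$-algebra $\Sigma(\mathcal{S})$). Everything else is the routine fact that indicators of events built from finitely many of the $X_t$ and their finite sums are $\mathcal{F}_N^X$-measurable.
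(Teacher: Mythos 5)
Your proof is correct and follows essentially the same route as the paper's: fix $(x,x')$, observe that $\omega\mapsto F_N(x,x')$ is $\mathcal{F}_N^X$-measurable as a finite average of indicators of events involving only $X_0,\dots,X_N$, pass to finite intersections of sublevel sets, and conclude via the generating class of cylinders. Your write-up is a bit more explicit about the standard generating-class reduction, but there is no substantive difference.
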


\begin{proof}
The proof is elementary and is shown here only for the convenience of the reader. Indeed, for any couple  $(x,x')$ the mapping 
$$ 
F_N (x, x'):=
\frac1{N}\sum_{t=0}^{N-1} 1(X_t \le x, X_{t+1} \le x')
$$ 
is measurable as a function of $\omega$, as a finite sum of random variables (indicators), which may be expressed by the relation
$$
\displaystyle (\omega:\; F_N (x, x')\le a) \in {\cal F}^X_N
$$
for any $a\in R$. Then,  for any finite sets of $(x_1,x'_1), \ldots, (x_n,x'_n)$ and $a_1,\ldots, a_n$ we have, 
$$
\displaystyle (\omega:\; F_N (x_1, x_1')\le a_1, \ldots F_N (x_n, x_n')\le a_n)) \in {\cal F}^X_N, 
$$ 
by the definition of what is a sigma-algebra. Therefore,  $F_N (\cdot, \cdot)$ as a function of  $\omega$ is, indeed, $({\cal F}_N^X; \Sigma(S))$-measurable, as required. 
\end{proof}

\medskip

In the next lemma it is assumed that the distribution of $Y_0 = (X_0, X_1)$ is invariant. In this case its distribution function is denoted by $\hat F^\theta (x, x')$; recall that due to ergodicity it is unique. It may be presented by the formula
\begin{equation}\label{invd2}
\hat F^\theta(x,x') = \hat F^\theta(x)p^\theta_{x,x'}, 
\end{equation}
where, in turn, $\hat F^\theta(x)$ is the (unique) invariant distribution function of the MC $X_n$ with respect to the probability measure $P^\theta$,  which is simultaneously the limiting distribution function for the $(X_n)$.

\begin{Lemma} \label{lem6}
Under the assumption that all transition probabilities $p_{ij}^\theta,\, i,j\in {\cal X}$ are Borel measurable in $\theta$, the invariant distribution function $\hat F^\theta (x, x')$ is Borel measurable in $\theta$ for each pair $(x, x')$. 
\end{Lemma}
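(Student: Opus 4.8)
The idea is to write $\hat F^\theta(x,x')$ out explicitly as a countable combination of the transition probabilities $p^\theta_{ij}$ and the invariant point masses $\mu^\theta(\{i\})$, and then to lean on two elementary stability properties of Borel measurability: a countable sum of $[0,1]$-valued Borel measurable functions is Borel measurable, and a pointwise limit of Borel measurable functions is Borel measurable.

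First I would recall, as the explicit content of formula (\ref{invd2}), that since $Y_0=(X_0,X_1)$ is started from the stationary law,
$$
\hat F^\theta(x,x') \;=\; \sum_{i\in\mathcal{X},\,i\le x}\mu^\theta(\{i\})\sum_{j\in\mathcal{X},\,j\le x'}p^\theta_{ij}.
$$
By assumption each $p^\theta_{ij}$ is Borel measurable in $\theta$, so for fixed $i$ the inner sum $\sum_{j\le x'}p^\theta_{ij}$, being a monotone (hence pointwise) limit of finite partial sums of measurable functions, is Borel measurable in $\theta$; then the products $\mu^\theta(\{i\})\sum_{j\le x'}p^\theta_{ij}$ and the outer countable sum over $i$ are Borel measurable as well, \emph{provided} $\theta\mapsto\mu^\theta(\{i\})$ is Borel measurable for each $i\in\mathcal{X}$.

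To establish this last point I would use ergodicity. Fix an arbitrary reference state $i_0\in\mathcal{X}$ and take the initial distribution $\mu_0=\delta_{i_0}$; then Definition \ref{def1} (convergence in total variation, in particular pointwise) gives
$$
\mu^\theta(\{i\}) \;=\; \lim_{t\to\infty}p^\theta_{i_0,i}(t),
$$
where $p^\theta_{i_0,i}(t)$ is the $(i_0,i)$-entry of the $t$-fold product of the one-step matrix. For finite $\mathcal{X}$ this entry is a polynomial in the $p^\theta_{ij}$, hence Borel measurable in $\theta$; for countable $\mathcal{X}$ it is a convergent series of products of the $p^\theta_{ij}$, i.e.\ a pointwise limit of finite sums of products of measurable functions, hence again Borel measurable in $\theta$. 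Letting $t\to\infty$ and using once more that a pointwise limit of measurable functions is measurable, we get Borel measurability of $\theta\mapsto\mu^\theta(\{i\})$, which finishes the proof.

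The only point requiring genuine care is the countable state space case: one must check that the iterated transition probabilities $p^\theta_{i_0,i}(t)$ are well defined (automatic, as they are probabilities) and that each of them is produced from the entries $p^\theta_{ij}$ by countably many arithmetic operations and limits, so that measurability survives at every stage. Beyond that, everything is bookkeeping with the two stability properties above, and since all quantities lie in $[0,1]$ no delicate interchange of limits is needed.
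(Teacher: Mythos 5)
Your proposal is correct and follows essentially the same route as the paper: both arguments rest on the measurability of the $n$-step transition probabilities, pass to the invariant probabilities as pointwise limits, and then assemble $\hat F^\theta(x,x')$ as a countable sum of products $\mu^\theta(\{i\})\,p^\theta_{ij}$. The only cosmetic difference is that the paper takes the $n\to\infty$ limit at the level of the whole distribution function $\mathbb P^\theta_{x_0}(X_n\le x,\,X_{n+1}\le x')$, whereas you take it entry by entry for $\mu^\theta(\{i\})$ and then sum; both are covered by the same stability properties of Borel measurability.
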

\begin{proof}
Indeed, invariant probabilities $p_{inv}^\theta(i),\, i\in {\cal X}$ are measurable in $\theta$ as limits of measurable $n$-step transition probabilities. So, the ``double'' invariant probabilities $p_{inv}^\theta(i)p_{}^\theta(ij),\, i,j\in {\cal X}$ also have the same property. Hence, the theoretical d.f. 
$$
\mathbb P^\theta_{x_0}(X_n\le x, X_{n+1} \le x') 
= \sum_{i\le x} p_{x_0,i}^\theta(n) \sum_{j\le x'}p^\theta_{i,j}
$$
is clearly Borel measurable in $\theta$, too. So is its limit at $n\to\infty$ which equals $\hat F^\theta (x, x')$, as required.
\end{proof}

\medskip

\noindent
Let us recall the L\`evy--Doob theorem on convergence of conditional expectations.
\begin{Proposition} \label{Levy}
(see, e.g., \cite[Theorem 4.3.10]{Krylov})
Let $E |\xi| < \infty$ and let   ${{\cal F}_n}, n = 0,1, \dots$ be an increasing sequence of $\sigma$-algebras, ${\cal F}_n \subset {\cal F}_{n+1}$, and let ${\cal F}_{\infty} $ be the minimal $\sigma$-algebra which contans all ${\cal F}_n$, that is, ${\cal F}_{\infty} = \bigvee\limits_{n} {\cal F}_n$ (that is the minimal sigma-algebra generated by all ${\cal F}_n$). Then
$$
\lim_{n \to \infty} \mathbb E(\xi|{\cal F}_n) = \mathbb E(\xi|{\cal F}_{\infty}) , \quad \mbox{a.s.}
$$ 
and 
$$
\lim_{n \to \infty} \mathbb E| \mathbb E(\xi|{\cal F}_{\infty}) - \mathbb E(\xi|{\cal F}_n)| = 0.
$$
\end{Proposition}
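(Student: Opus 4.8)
The statement is the classical Lévy–Doob convergence theorem, and the route I would take is through Doob's martingale convergence theorem. The plan is to set $\xi_n := \mathbb E(\xi \mid {\cal F}_n)$, to note that $(\xi_n, {\cal F}_n)_{n\ge 0}$ is a martingale by the tower property of conditional expectations, and then to show that both the almost sure and the $L^1$ limits exist and coincide with $\mathbb E(\xi\mid {\cal F}_\infty)$.

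The key preliminary step will be to establish that the family $\{\xi_n\}_{n\ge 0}$ is uniformly integrable. By Jensen's inequality one has $|\xi_n| \le \mathbb E(|\xi| \mid {\cal F}_n)$, so, since $\{|\xi_n| > c\}\in {\cal F}_n$, for any level $c>0$
$$
\mathbb E\bigl(|\xi_n|\,1(|\xi_n| > c)\bigr) \le \mathbb E\bigl(|\xi|\,1(|\xi_n| > c)\bigr).
$$
Because $\mathbb P(|\xi_n| > c) \le c^{-1}\mathbb E|\xi_n| \le c^{-1}\mathbb E|\xi| \to 0$ uniformly in $n$ as $c\to\infty$, and because the set function $A \mapsto \int_A |\xi|\,d\mathbb P$ is absolutely continuous with respect to $\mathbb P$ (as $\mathbb E|\xi| < \infty$), the right-hand side will tend to $0$ uniformly in $n$, which is exactly uniform integrability.

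With uniform integrability secured, Doob's martingale convergence theorem will supply a random variable $\eta$ with $\xi_n \to \eta$ almost surely and in $L^1$; the $L^1$ convergence is already the second displayed assertion once $\eta$ is identified. To identify the limit I would argue that $\eta$ is ${\cal F}_\infty$-measurable as an almost sure limit of ${\cal F}_\infty$-measurable functions, and that for every $m$ and every $A\in{\cal F}_m$ the martingale identity $\int_A \xi_n\,d\mathbb P = \int_A \xi\,d\mathbb P$ (valid for $n\ge m$) passes to the limit, by $L^1$ convergence, to $\int_A \eta\,d\mathbb P = \int_A \xi\,d\mathbb P$. The family of sets on which this holds is a $\lambda$-system containing the $\pi$-system $\bigcup_m {\cal F}_m$, so the monotone class (Dynkin) theorem will extend it to ${\cal F}_\infty = \sigma\bigl(\bigcup_m {\cal F}_m\bigr)$, giving $\eta = \mathbb E(\xi\mid {\cal F}_\infty)$ almost surely, the first displayed assertion.

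The hard part is not the algebra but the almost sure convergence underlying Doob's theorem, which rests on the upcrossing inequality. If one prefers a self-contained argument rather than citing \cite{Krylov}, the upcrossing bound for $(\xi_n)$ must be carried out to control the oscillation of trajectories; everything else -- the uniform integrability estimate and the monotone-class identification of the limit -- is routine once the convergence theorem is in hand.
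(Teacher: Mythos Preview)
Your argument is correct and is the standard textbook proof of the L\'evy--Doob theorem: uniform integrability of $\xi_n = \mathbb E(\xi\mid{\cal F}_n)$ via Jensen's inequality and absolute continuity of $A\mapsto\int_A|\xi|\,d\mathbb P$, Doob's martingale convergence theorem for the almost sure and $L^1$ limits, and identification of the limit with $\mathbb E(\xi\mid{\cal F}_\infty)$ by a $\pi$--$\lambda$ argument on $\bigcup_m{\cal F}_m$.

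There is, however, nothing to compare against: the paper does not prove this proposition at all. It is stated with an explicit reference to \cite[Theorem 4.3.10]{Krylov} and used as a black box in the proof of Theorem~\ref{thm1}. Your write-up is therefore more detailed than what the paper provides; if anything, it supplies the content that the paper delegates to the citation.
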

In our setting due to the proposition \ref{Levy} we have, 
$$
\lim_{n \to \infty} \mathbb E(\theta|X_1,\dots, X_n) = \lim_{n \to \infty} \phi_n(X) = \mathbb E(\theta |{\cal F}^X_{\infty}) \quad \mbox{a.s.}
$$
This implies that the limit in the left hand side in the latter  double equality  is ${\cal F}^X_{\infty}$-measurable. 

\smallskip

\begin{Lemma}\label{lem7}
Assume that the transition probability matrices are different for different parameter values, that is, $\theta\not = \theta'$ implies that there exist $i,j$ such that $p^\theta_{ij} \not = p^\theta_{ij}$. Then the mapping $\theta \mapsto \hat F^\theta(j,j'), j,j'\in \mathcal{X}$ is one-to-one. Moreover, the mapping 
\begin{equation}\label{G}
G:\; \theta \mapsto \hat F^\theta (x, x'), \quad x,x'\in \mathbb R,
\end{equation}
is also one-to-one.

\end{Lemma}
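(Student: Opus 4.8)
The plan is to read off the stationary joint law of the pair $(X_0,X_1)$ from $\hat F^\theta$ and then recover $\theta$ from that joint law. Write $\nu^\theta(i,j):=\mu^\theta(\{i\})\,p^\theta_{ij}$ for $i,j\in\mathcal X$; this is the one-step stationary joint probability, and by construction $\hat F^\theta(x,x')=\sum_{i\le x,\ j\le x'}\nu^\theta(i,j)$, the sums running over $i,j\in\mathcal X$. Thus it suffices to show (a) the array $\big(\hat F^\theta(j,j')\big)_{j,j'\in\mathcal X}$ determines every $\nu^\theta(i,j)$, and (b) the array $\big(\nu^\theta(i,j)\big)_{i,j}$ determines $\theta$. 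The mapping $G$ of \eqref{G} has all the coordinates $\hat F^\theta(j,j')$, $j,j'\in\mathcal X$, among its own coordinates, so injectivity of $G$ is an immediate consequence of injectivity of $\theta\mapsto\big(\hat F^\theta(j,j')\big)_{j,j'\in\mathcal X}$, and only the latter has to be established.

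For step (a) I would carry out a two-dimensional Lebesgue--Stieltjes inversion. For fixed $j\in\mathcal X$ and $x'\in\mathbb R$ the left limit $\hat F^\theta(j-,x'):=\lim_{z\uparrow j}\hat F^\theta(z,x')$ exists by monotonicity in the first argument; since the probability measure with distribution function $\hat F^\theta$ is carried by $\mathcal X\times\mathcal X$ and $\mathcal X\subset\mathbb R^1$ is linearly ordered, one checks that $\hat F^\theta(j-,x')=\sup\{\hat F^\theta(i,x'):i\in\mathcal X,\ i<j\}$, the supremum of the empty set being read as $0$; in particular this left limit is already a functional of the restriction of $\hat F^\theta$ to $\mathcal X\times\mathcal X$. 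Performing this in both arguments, the inclusion--exclusion identity for the rectangle with top-right corner $(j,j')$ yields
\[
\nu^\theta(j,j')=\hat F^\theta(j,j')-\hat F^\theta(j-,j')-\hat F^\theta(j,j'-)+\hat F^\theta(j-,j'-),
\]
whose right-hand side depends only on the array $\big(\hat F^\theta(i,i')\big)_{i,i'\in\mathcal X}$; summing over $j'\in\mathcal X$ then also recovers the marginal $\mu^\theta(\{j\})=\sum_{j'\in\mathcal X}\nu^\theta(j,j')$.

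For step (b), on the support $\{i:\mu^\theta(\{i\})>0\}$ one has $p^\theta_{ij}=\nu^\theta(i,j)/\mu^\theta(\{i\})$, so $\hat F^\theta=\hat F^{\theta'}$ on $\mathcal X\times\mathcal X$ forces $\mu^\theta=\mu^{\theta'}$ and $p^\theta_{ij}=p^{\theta'}_{ij}$ for every $i$ with $\mu^\theta(\{i\})=\mu^{\theta'}(\{i\})>0$; together with the hypothesis that distinct parameters have distinct transition matrices this gives $\theta=\theta'$, as claimed. The step I expect to be the main obstacle --- and the place where the write-up needs care --- is precisely this reconstruction of the transition probabilities: $\hat F^\theta$ carries no information about the rows $p^\theta_{i\cdot}$ for states $i$ not charged by the invariant measure, so to pass from ``the matrices differ at some entry'' to ``the matrices differ at an entry supported by $\mu^\theta$'' one needs either that the differing entry lies in $\mathrm{supp}\,\mu^\theta\cap\mathrm{supp}\,\mu^{\theta'}$, or an additional irreducibility/positive-recurrence hypothesis guaranteeing $\mu^\theta(\{i\})>0$ for all $i$ and all $\theta$. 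Under such a hypothesis the argument above is complete, and the remaining ingredients --- existence of the left limits, the supremum formula, and the fact that $\hat F^\theta$ is the unique invariant distribution function so that the inversion really reconstructs $\nu^\theta$ --- are routine.
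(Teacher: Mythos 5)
Your argument is essentially the paper's: its proof also rests on the factorization of the stationary joint law of $(X_0,X_1)$ as (invariant mass at $i$) times $p^\theta_{ij}$ (formula (\ref{invd2})) together with the distinguishability of the transition matrices, split into the two cases ``the one-dimensional invariant d.f.\ already differ'' and ``they coincide, so the difference must come from the transition probabilities.'' Your steps (a) and (b) --- the two-dimensional Stieltjes inversion recovering $\nu^\theta(i,j)=\mu^\theta(\{i\})\,p^\theta_{ij}$ from the array $\bigl(\hat F^\theta(j,j')\bigr)_{j,j'\in\mathcal X}$, and then $p^\theta_{ij}=\nu^\theta(i,j)/\mu^\theta(\{i\})$ on the support --- merely make explicit what the paper leaves implicit, and your reduction of the injectivity of $G$ to that of its restriction to $\mathcal X\times\mathcal X$ is exactly the paper's ``follows straightforwardly'' step.

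The caveat you raise at the end is not a defect of your write-up; it is a genuine gap in the lemma as stated, which the paper's own proof glosses over. Ergodicity in the sense of Definition \ref{def1} does not force $\mu^\theta(\{i\})>0$ for every $i$: take $\mathcal X=\{1,2\}$, $\Theta=(0,1)$, $p^\theta_{11}=\theta$, $p^\theta_{12}=1-\theta$, $p^\theta_{21}=0$, $p^\theta_{22}=1$. Each such chain converges in total variation to $\delta_2$ from any initial law and satisfies the weak LLN of Assumption \ref{def2}, while the stationary law of the pair $(X_0,X_1)$ is $\delta_{(2,2)}$ for every $\theta$; hence $\hat F^\theta\equiv\hat F^{\theta'}$ although the transition matrices differ, and $G$ is not injective. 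So the extra hypothesis you ask for --- that the differing entry $p^\theta_{ij}\neq p^{\theta'}_{ij}$ occurs at a state $i$ charged by both invariant measures, or an irreducibility/positive-recurrence assumption guaranteeing full support --- is genuinely needed, and under it your argument is complete and correct.
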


\begin{proof}
The proof follows from the formula  (\ref{invd2}). Indeed, if for $\theta \neq \theta'$ the one-dimensional invariant distribution functions $\hat F_\theta(\cdot)$ are different, then two-dimensional are different, too. If for some pair  $\theta \not = \theta'$ the one-dimensional d.f. coincide, $\hat F^\theta(\cdot) = \hat F^{\theta'}(\cdot)$, then the two-dimensional one are yet different due to the formula  (\ref{invd2}) and by virtue of the distinguishability assumption of transition probabilities for different parameter values. The same property for the mapping $G$ follows straightforwardly.
\end{proof}

Further, due to the assumed LLN the following convergence of relative frequencies holds, 
$$
\frac1{n}\sum_{t=0}^{n-1} 1(X_t \le j) \stackrel{ \mathbb P^\theta}\to \hat F^\theta(j) = \mathbb E_{inv}^\theta 1(X_0 \le j), \quad n\to\infty, 
$$ 
where $ \mathbb E_{inv}^\theta$ is expectation with respect to the corresponding invariant measure. A similar convergence holds true for two-dimensional relative frequencies, 
$$
\frac1{n}\sum_{t=0}^{n-1} 1(X_t \le j, X_{t+1} \le j') \stackrel{P^\theta}\to \hat F^\theta(j,j') = \mathbb E_{inv}^\theta 1(X_0 \le j, X_1 \le j'), \quad n\to\infty. 
$$
Since two-dimensional invariant d.f. $\hat F^\theta(\cdot,\cdot)$ are different for any two different parameter values, the value $\theta$ is uniquely determined by the infinite trajectory of observations $X = (X_n, n=1, \ldots )$. In other words, the mapping $\theta \mapsto \hat F^\theta(\cdot,\cdot)$ is one-to-one. This mapping is measurable due to the LLN and because the limit of measurable mappings is also measurable. Moreover, as it follows from proposition \ref{pro8} (see below; it is not linked to this lemma), the inverse mapping is also measurable. 

Let us recall some further definitions; it is necessary because one of them is not standard in most of mathematics areas (see definition \ref{def9} in what follows).

\begin{definition}
Borel measurable sets in a Polish (\& more generally, in any topological) space $\mathbb{X}$ are the sets of the minimal $\sigma$-algebra $\cal{B}(\mathbb{X})$ of subsets in $ \mathbb{X}$ which contains all open subsets in  $ \mathbb{X}$.
\end{definition}

\begin{definition}\label{def9}
Let $X, Y$ be Borel measurable sets in Polish spaces ${\mathbb X}, {\mathbb Y}$, respectively. The mapping $f : X \to Y$ is called:
\begin{enumerate}
\item Borel iff its graph $\Gamma_f = \{(x,y): x\in X, f(x) = y\}$ is a Borel set in the space $\mathbb{X} \times \mathbb{Y}$;

\item $B$-measurable iff the image of any Borel set from the space $Y$ under the inverse mapping $f^{-1}$ is a Borel set in  ${\mathbb X}$. 
\end{enumerate}
\end{definition}
Note that the ``usual'' definition of a Borel function in the majority of areas of mathematics coincides with 10.2.  

The next result may be found in \cite[Theorem 2.4.3]{KanoveyLyubetskiy} (we only state the part of this theorem which will be used in what follows). 
\begin{Proposition}[{\color{black}\cite[Theorem 2.4.3]{KanoveyLyubetskiy}}]\label{pro8}
Let $X,Y$ be Borel sets in Polish spaces and $f: X \to Y$ be some mapping. Then:
\begin{enumerate}
\item If $f$ is Borel measurable then the images of all Borel sets from $Y$ under the inverse mapping $f^{-1}$ are also Borel, so that the mapping $f$ is $B$-measurable;

\item Vice versa, if $f$ is $B$-measurable then it is a Borel function. 
    
\end{enumerate}
\end{Proposition}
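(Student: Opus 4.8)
The plan is to prove the two implications separately, reducing both to standard facts of descriptive set theory: first, that a Borel subset of a Polish space, equipped with the trace Borel structure, is a standard Borel space (Kuratowski), so that $X$, $Y$, $\mathbb X\times\mathbb Y$ and subsets such as the graph $\Gamma_f$ all carry standard Borel structures; and second, the Lusin--Suslin theorem, that an injective Borel map between standard Borel spaces carries Borel sets to Borel sets.

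I would start with part 2 (the direction $B$-measurable $\Rightarrow$ Borel), which is elementary. Since $\mathbb Y$ is separable metrizable, fix a countable basis $\{V_n\}_{n\ge 1}$ of $Y$, and check the identity
$$
\Gamma_f \;=\; \bigcap_{n\ge 1}\Big[\big(f^{-1}(V_n)\times V_n\big)\;\cup\;\big((X\setminus f^{-1}(V_n))\times (Y\setminus V_n)\big)\Big].
$$
The inclusion ``$\subseteq$'' is immediate from the definition of the graph; for ``$\supseteq$'' one uses metrizability of $Y$: if $f(x)\ne y$, choose a basic open $V_n$ with $y\in V_n$ and $f(x)\notin V_n$, and observe that then $(x,y)$ lies in neither piece of the $n$-th term. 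Now $B$-measurability of $f$ makes each $f^{-1}(V_n)$ a Borel subset of $X$ (preimages of open sets are a special case), so every set occurring in the displayed intersection is Borel in $\mathbb X\times\mathbb Y$, hence so is $\Gamma_f$; this is precisely the assertion that $f$ is Borel in the sense of Definition~\ref{def9}.1.

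For part 1 ($f$ Borel, i.e.\ $\Gamma_f$ Borel, $\Rightarrow$ $B$-measurable) I would use the projection $\pi_X\colon \Gamma_f\to X$. It is a continuous bijection, since for each $x$ there is exactly one $y=f(x)$ with $(x,y)\in\Gamma_f$; in particular $\pi_X$ is an injective Borel map between the standard Borel spaces $\Gamma_f$ and $X$. Given any Borel $B\subseteq Y$, the set $\Gamma_f\cap(X\times B)$ is Borel, so by the Lusin--Suslin theorem its image
$$
\pi_X\big(\Gamma_f\cap(X\times B)\big)\;=\;f^{-1}(B)
$$
is a Borel subset of $X$. Since $B$ was an arbitrary Borel set, $f$ is $B$-measurable.

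The main obstacle is exactly this last step: writing $f^{-1}(B)$ as a projection of a Borel set only shows, a priori, that it is analytic, and projections of Borel sets need not be Borel. What saves the argument is the injectivity of $\pi_X$ on the graph, and turning that into a genuine proof requires the Lusin--Suslin (equivalently, Lusin's injective-image) theorem; part 2, by contrast, needs nothing beyond second countability of $Y$. Since the statement is in any case quoted as \cite[Theorem 2.4.3]{KanoveyLyubetskiy}, one may alternatively just cite that reference, but the sketch above records why it is true.
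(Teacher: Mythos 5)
The paper does not prove Proposition~\ref{pro8} at all: it is imported verbatim as a citation of \cite[Theorem 2.4.3]{KanoveyLyubetskiy}, so there is no in-paper argument to compare yours against. Your sketch is nevertheless a correct and essentially canonical proof of the quoted statement. Part 2 is sound: the displayed intersection formula for $\Gamma_f$ is the standard one, the ``$\supseteq$'' direction only needs that distinct points of $Y$ are separated by some basic open set (true since $Y$ sits in a Polish, hence metrizable and second countable, space), and each term is a finite union of products of Borel sets, so $\Gamma_f$ is Borel. Part 1 is also sound, and you correctly isolate the one genuinely nontrivial ingredient: $f^{-1}(B)=\pi_X\bigl(\Gamma_f\cap(X\times B)\bigr)$ is a priori only analytic, and it is the injectivity of $\pi_X$ on the graph together with the Lusin--Suslin theorem (injective Borel images of Borel sets in standard Borel spaces are Borel) that upgrades this to Borel; that theorem is proved independently of the present statement (via the separation theorem for analytic sets), so there is no circularity. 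The only cosmetic caveat is that one should note that relatively open subsets of $Y$ and Borel subsets of $X$, $Y$ are Borel in the ambient spaces $\mathbb X$, $\mathbb Y$ because $X$ and $Y$ are themselves Borel there --- which your appeal to standard Borel structures implicitly handles. In short: the proposal supplies a correct self-contained justification for a result the paper merely cites.
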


\begin{Corollary}\label{cor11}
If the mapping $f$ is Borel measurable and one-to-one, then its inverse $f^{-1}$ is $B$-measurable and, hence, Borel one in the sense of  definition \ref{def9}.
\end{Corollary}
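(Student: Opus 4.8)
The plan is to work entirely with the graph characterisation of Borel maps from definition~\ref{def9} and to pass between the two notions of measurability there by means of proposition~\ref{pro8}. First I would note that, by proposition~\ref{pro8}, the hypothesis that $f$ is Borel measurable is — in either reading of that phrase — equivalent to the statement that the graph $\Gamma_f=\{(x,y):x\in X,\ f(x)=y\}$ is a Borel subset of $\mathbb X\times\mathbb Y$. Next I would introduce the coordinate-swap map $\tau:\mathbb X\times\mathbb Y\to\mathbb Y\times\mathbb X$ given by $\tau(x,y)=(y,x)$; since $\tau$ is a homeomorphism of Polish spaces, both $\tau$ and $\tau^{-1}$ carry Borel sets to Borel sets.

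Because $f$ is one-to-one, $f^{-1}$ is a well-defined map from $f(X)$ onto $X$, and its graph is precisely $\Gamma_{f^{-1}}=\tau(\Gamma_f)$; hence $\Gamma_{f^{-1}}$ is a Borel subset of $\mathbb Y\times\mathbb X$ by the previous step. To be allowed to speak of $f^{-1}$ as a map between Borel sets in the sense of definition~\ref{def9}, one still needs its domain $f(X)$ to be a Borel subset of $\mathbb Y$. This is the one genuinely non-formal ingredient: it is the Lusin--Souslin theorem, that an injective Borel image of a Borel set is Borel, which is part of the cited \cite[Theorem 2.4.3]{KanoveyLyubetskiy}. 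Granting this, the fact that $\Gamma_{f^{-1}}$ is Borel means, by the first clause of definition~\ref{def9}, that $f^{-1}$ is a Borel map; applying the first part of proposition~\ref{pro8} to $f^{-1}$ then yields that $f^{-1}$ is also $B$-measurable, which is the assertion of the corollary.

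The only step I expect to require care is the appeal to the Lusin--Souslin theorem for the Borelness of $f(X)$; the manipulation of the graph under $\tau$ is routine. In the uses made later in the paper — the mapping $\theta\mapsto\hat F^\theta(\cdot,\cdot)$ and the mapping $G$ of lemma~\ref{lem7} — the ambient spaces $\Theta\subset\mathbb R^m$ and $\mathcal S$ are Polish and the maps are injective and Borel measurable, so the corollary applies directly and supplies the measurability of the inverse that is invoked in the proof of theorem~\ref{thm1}.
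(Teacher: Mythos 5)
Your argument is correct, and it is worth noting that the paper offers no proof of this corollary at all: it is stated bare, as though it followed formally from the two quoted clauses of Proposition \ref{pro8}. It does not quite, and you put your finger on exactly the right point — the injectivity hypothesis has to enter through the Lusin--Souslin theorem (an injective Borel image of a Borel set is Borel), which is the unquoted part of the cited \cite[Theorem 2.4.3]{KanoveyLyubetskiy}. The route suggested by the paper's phrasing (``$B$-measurable and, hence, Borel'') is: Lusin--Souslin gives that $f(A)$ is Borel for every Borel $A\subset X$, which is literally the statement that $f^{-1}$ is $B$-measurable in the sense of definition \ref{def9} since $(f^{-1})^{-1}(A)=f(A)$, and clause 2 of proposition \ref{pro8} then upgrades this to Borelness. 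You go in the opposite direction: the coordinate swap $\tau(\Gamma_f)=\Gamma_{f^{-1}}$ gives Borelness of the graph of $f^{-1}$ essentially for free, Lusin--Souslin is invoked only once to make the domain $f(X)$ a legitimate Borel set, and clause 1 of proposition \ref{pro8} then yields $B$-measurability. Both routes rest on the same nontrivial ingredient; yours isolates it more cleanly and makes explicit the domain issue ($f(X)$ must be Borel for $f^{-1}$ to qualify as a map between Borel sets in Polish spaces at all), which the paper passes over in silence. Your closing observation that the corollary applies to the map $G$ of lemma \ref{lem7}, with $\Theta\subset\mathbb R^m$ open and $\mathcal S$ Polish under the Prokhorov metric by proposition \ref{pro9}, is also accurate.
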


In order to apply proposition \ref{pro8} in the proof of our main result in the next section, let us show that both proposition and its corollary \ref{cor11} are applicable to the mapping $G$ (see (\ref{G})). 

\begin{Lemma}\label{lem4-4}
Under the assumptions of theorem \ref{thm1} the mapping $G^{-1}$ is Borel and B-measurable.
\end{Lemma}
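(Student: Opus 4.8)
The plan is to verify the hypotheses of Proposition \ref{pro8} and Corollary \ref{cor11} for the map $G$ defined in (\ref{G}), and then simply invoke them. By Lemma \ref{lem7} the map $G:\Theta\to{\cal S}$ is one-to-one, so by Corollary \ref{cor11} it suffices to check two things: first, that $\Theta$ and the image $G(\Theta)\subset {\cal S}$ live in Polish spaces (indeed are Borel subsets of such spaces), and second, that $G$ itself is Borel measurable in the sense of definition \ref{def9} (equivalently, $B$-measurable, by Proposition \ref{pro8}). Once these are in place, Corollary \ref{cor11} immediately gives that $G^{-1}$ is $B$-measurable and hence Borel, which is exactly the claim.

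The first step is to address the spaces. The domain $\Theta\subset\mathbb R^m$ is a domain, hence an open (thus Borel) subset of the Polish space $\mathbb R^m$. For the target, I would point to the forthcoming Proposition \ref{pro9}, which (per the remark just before the definition of $\Sigma({\cal S})$) guarantees that ${\cal S}$ equipped with an appropriate metric is a Polish space, with the cylinder $\sigma$-algebra $\Sigma({\cal S})$ coinciding with its Borel $\sigma$-algebra; one then restricts attention to $G(\Theta)$, or more conveniently works with $G$ as a map into all of ${\cal S}$. The second step is measurability of $G$ itself. Here the key is Lemma \ref{lem6}: for each fixed pair $(x,x')$ the coordinate map $\theta\mapsto \hat F^\theta(x,x')$ is Borel measurable in $\theta$. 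Since the Borel $\sigma$-algebra on the Polish space ${\cal S}$ is generated by the finite cylinders (the evaluation functionals $F\mapsto F(x,x')$), a standard argument shows that a map into ${\cal S}$ is Borel measurable iff all its cylinder-coordinate compositions are measurable; applying this with Lemma \ref{lem6} shows $G$ is $(\mathcal B(\Theta);\Sigma({\cal S}))$-measurable, i.e. $B$-measurable in the sense of definition \ref{def9}. By Proposition \ref{pro8}(2) it is then also Borel (its graph is a Borel set).

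With $\Theta$ and ${\cal S}$ Borel subsets of Polish spaces and $G$ Borel and one-to-one, Corollary \ref{cor11} applies verbatim and yields that $G^{-1}$ (defined on $G(\Theta)$, which by the injective-image theorem for Borel maps is itself a Borel subset of ${\cal S}$) is $B$-measurable, hence Borel. The main obstacle, and the only genuinely non-routine point, is the passage from ``each cylinder coordinate of $G$ is measurable'' to ``$G$ is Borel measurable as a map into the metric space ${\cal S}$''; this needs the identification $\Sigma({\cal S})=\mathcal B({\cal S})$, which is precisely what Proposition \ref{pro9} is invoked to supply, so strictly speaking this lemma is logically downstream of that proposition even though it is stated earlier. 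Everything else is a direct citation of Lemma \ref{lem6}, Lemma \ref{lem7}, Proposition \ref{pro8} and Corollary \ref{cor11}.
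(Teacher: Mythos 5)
Your proposal is correct and follows essentially the same route as the paper: establish measurability of $G$, invoke Lemma \ref{lem7} for injectivity, and conclude via Corollary \ref{cor11}. The only difference is that you justify the measurability of $G$ more carefully (coordinatewise measurability from Lemma \ref{lem6} plus the cylinder-generation and $\Sigma({\cal S})=\mathcal B({\cal S})$ identification via Proposition \ref{pro9}), whereas the paper disposes of this step with a terse ``limit of $B$-measurable functions $\mathbb E^\theta F_N(\cdot,\cdot)$'' argument; your version is, if anything, more complete on this point.
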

\begin{proof}
Firstly, the mapping $G: \theta \mapsto \hat F^\theta (\cdot, \cdot)$ is $B$-measurable in the sense of the definition \ref{def9}. Indeed, the element $\hat F^\theta (\cdot, \cdot)$ is a limit in probability $\mathbb P^\theta$ of the sequence of functions $\mathbb E^\theta F_N (\cdot, \cdot)$, which are all $B$-measurable in $\theta$; therefore, so is their limit. 

Secondly, according to lemma \ref{lem7}, the mapping $G$ is one-to one; hence, so is its inverse is $G^{-1}$. The claim of lemma  \ref{lem4-4} now follows from corollary \ref{cor11}.
\end{proof}

\medskip

Further, it is desirable that the parametric space $\Theta$ and the space of invariant distribution functions were complete and separable metric spaces. It is trivial with $\Theta \subset \mathbb{R}^m$ with the Euclidean metric; for the space of ``double'' distribution functions a suitable matric should be chosen which is, of course, not unique. To each distribution function there correspond a probability distribution on ${\mathbb R}^{2}$. Let us accept that the distance between two distribution functions is defined as a distance between their corresponding measures. Let us choose Prokhorov's metric $d_p(\nu_1, \nu_2)$ for them: if  $\alpha$-neighbourhood of a set $A \subset {\mathbb R}^{2}$ is denoted by 
$$
A_\alpha \!:= \!\{\bar a\!:=\! (a_1,a_2)\in {\mathbb R}^{2}: \; d(\bar a,A) \!<\! \alpha\}, \;  \text{if} \; A \neq \emptyset, \;  \emptyset_\alpha = \emptyset \quad \forall \alpha >0,
$$
then the distance between probability measures $\nu_1, \nu_2$ on ${\mathbb R}^{2}$ is defined by the formula 
$$
d_p(\nu_1, \nu_2)\!:=\!\inf \{ \alpha \!>0: \nu_2(A) \!\leq\! \nu_1(A_\alpha) \!+\! \alpha \; \text{\&} \;  \nu_1(A) \!\leq\! \nu_2(A_\alpha) \!+\! \alpha , \; \forall A \in \mathcal{B} (S) \}.
$$
The same formula provides the distance between two distribution functions, namely, as a distance $d_p(\cdot, \cdot)$ between the corresponding measures on 
${\mathbb R}^{2}$.

\begin{Proposition}[{\color{black}\cite[Lemma 1.4]{Prokhorov}}]\label{pro9}
Let a metric space be complete and separable. Then the space of probability measures on it with the Prokhorov metric is also complete and separable. 
\end{Proposition}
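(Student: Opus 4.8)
The plan is to recognise this as the classical theorem of Prokhorov and to reconstruct its proof in two essentially independent halves — separability and completeness — invoking only three standard external facts: that on a separable metric space $\mathbb{X}$ the Prokhorov metric $d_p$ metrizes weak convergence of Borel probability measures; Ulam's theorem that a single Borel probability measure on a Polish space is tight; and Prokhorov's compactness theorem (tightness implies relative sequential compactness for the weak topology). Write $\mathcal{P}(\mathbb{X})$ for the space of Borel probability measures on $\mathbb{X}$.

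\textbf{Separability.} Fix a countable dense set $D=\{x_1,x_2,\ldots\}\subset\mathbb{X}$ and let $\mathcal{M}_0$ be the countable family of finitely supported measures $\sum_{i=1}^{k}q_i\delta_{x_i}$ with $x_i\in D$, $q_i\in\mathbb{Q}_{\ge0}$, $\sum_i q_i=1$. Given $\mu\in\mathcal{P}(\mathbb{X})$ and $\varepsilon>0$, I would cover $\mathbb{X}$ by the balls $B(x_i,\varepsilon)$, disjointify them into Borel pieces $E_j\subseteq B(x_j,\varepsilon)$, keep finitely many $E_1,\ldots,E_k$ carrying $\mu$-mass exceeding $1-\varepsilon$, and set $\nu:=\sum_{j\le k}q_j\delta_{x_j}$ with each $q_j$ a rational close to $\mu(E_j)$ (the defect $1-\sum_{j\le k}\mu(E_j)$ and the rounding errors pushed onto one atom). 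A direct estimate from the definition of $d_p$ — each $E_j$ meeting a Borel set $A$ lies in $A_{2\varepsilon}$, and conversely $x_j\in A$ forces $E_j\subseteq A_\varepsilon$ — gives $d_p(\mu,\nu)\le c\varepsilon$ for an absolute constant $c$, so $\mathcal{M}_0$ is dense. This half uses only separability of $\mathbb{X}$.

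\textbf{Completeness.} Let $\{\mu_n\}$ be $d_p$-Cauchy. The strategy has four steps: (i) prove $\{\mu_n\}$ is tight; (ii) extract, by Prokhorov's compactness theorem, a subsequence $\mu_{n_j}$ converging weakly to a probability measure $\mu$; (iii) note that, $\mathbb{X}$ being separable, this means $d_p(\mu_{n_j},\mu)\to0$; (iv) for arbitrary $n$ write $d_p(\mu_n,\mu)\le d_p(\mu_n,\mu_{n_j})+d_p(\mu_{n_j},\mu)$ and let $j\to\infty$ and then $n\to\infty$, the first term being controlled by the Cauchy property, to obtain $d_p(\mu_n,\mu)\to0$. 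Steps (ii)--(iv) are soft; the content, and the only place completeness of $\mathbb{X}$ enters, is step (i) — which I expect to be the main obstacle. For it, fix $\varepsilon>0$; for each $k\in\mathbb{N}$ use the Cauchy property to pick $N_k$ with $d_p(\mu_n,\mu_{N_k})<\varepsilon 2^{-k-1}$ for $n\ge N_k$, and by Ulam's theorem (applied to the finitely many $\mu_1,\ldots,\mu_{N_k}$) choose a finite union $A_k$ of balls of radius $2^{-k}$ with $\mu_j(A_k)>1-\varepsilon 2^{-k-1}$ for all $j\le N_k$. Put $G_k:=(A_k)_{\varepsilon 2^{-k-1}}$, again a finite union of balls of radii tending to $0$. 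The Prokhorov inequality $\mu_{N_k}(A)\le\mu_n(A_{\varepsilon 2^{-k-1}})+\varepsilon 2^{-k-1}$, valid for all Borel $A$ since $d_p(\mu_n,\mu_{N_k})<\varepsilon 2^{-k-1}$, applied to $A=A_k$ gives $\mu_n(G_k)>1-\varepsilon 2^{-k}$ for $n\ge N_k$, while for $n<N_k$ trivially $\mu_n(G_k)\ge\mu_n(A_k)>1-\varepsilon 2^{-k-1}$; hence $\mu_n(G_k)>1-\varepsilon 2^{-k}$ for every $n$. Then $K:=\overline{\bigcap_k G_k}$ is closed and totally bounded (the centres of the balls forming $G_k$ give a net of $K$ of mesh $\to0$), hence compact because $\mathbb{X}$ is complete, and $\mu_n(\mathbb{X}\setminus K)\le\sum_k\mu_n(\mathbb{X}\setminus G_k)<\varepsilon$ for every $n$, which is the required tightness.

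The one genuinely delicate point is the bookkeeping in step (i): balancing the $\varepsilon$-budget across $k$ so the series telescopes below $\varepsilon$, while matching the ``head'' $\mu_1,\ldots,\mu_{N_k}$ (handled by Ulam) with the ``tail'' $n\ge N_k$ (controlled by the Cauchy estimate), and keeping each $G_k$ a finite union of balls of vanishing radius so that $\bigcap_k G_k$ is totally bounded; everything else is routine soft analysis. As an alternative that avoids quoting Prokhorov's compactness theorem, one could instead define $\mu$ directly by $\int f\,d\mu:=\lim_n\int f\,d\mu_n$ on a countable convergence-determining family of bounded Lipschitz functions $f$ (these limits exist since $|\int f\,d\mu_n-\int f\,d\mu_m|$ is controlled by $d_p(\mu_n,\mu_m)$) and then check the resulting positive linear functional is represented by a probability measure — but that last check is again exactly the tightness above, so the obstacle is unchanged either way.
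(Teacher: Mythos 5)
Your argument is correct, but note that the paper offers no proof of this proposition at all: it is imported verbatim as Lemma 1.4 of Prokhorov's 1956 paper, so there is nothing internal to compare against. Your reconstruction is the standard textbook proof (separability via rational convex combinations of point masses on a countable dense set; completeness via tightness of a Cauchy sequence, Ulam's theorem, and Prokhorov's compactness theorem), and the delicate step you single out --- the $2^{-k}$ bookkeeping that makes $\bigcap_k G_k$ totally bounded while keeping $\mu_n(\mathbb{X}\setminus G_k)<\varepsilon 2^{-k}$ uniformly in $n$ --- is handled correctly; there is no circularity in quoting the compactness theorem, since it is logically independent of the completeness claim.
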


\section{Proof of theorem 
\ref{thm1}}

\begin{proof} 
By virtue of L\`evy--Doob's theorem (see proposition \ref{Levy}) we have, 
\begin{equation}\label{thetaB}
\hat\theta_n \equiv \mathbb E(\theta|X_1,\ldots,X_n) = \mathbb E(\theta |{\cal F}^X_n) \to \mathbb E(\theta |{\cal F}^X_\infty) =: \hat\theta_\infty, \quad n\to\infty \quad \mbox{$P$-a.s.}.
\end{equation}
Due to its definition, the random variable $\hat\theta_\infty$ is  ${\cal F}^X_\infty$-measurable; being a conditional expectation, it is a Bayesian estimator of $\theta$ constructed upon the infinite sequence of observations  $X_1, X_2, \ldots$ For the proof of the theorem, it suffices to establish the equality 
\begin{equation}\label{thetatheta}
\hat\theta_\infty \stackrel{\mathbb P-\text{a.s.}}= \theta.
\end{equation}
The basis for thsi equality is the empirical fact that $\theta$ is uniquely deterined by the infinite sequence of observations due to the assumed LLN and because of the one-to-one correspondence between  $\theta$ and the invariant distribution of the pair $(X_0,X_1)$. Let us provide more rigorous considerations related, in particular, to the measurability. 

By virtue of the LLN assumptions, we have
$$
{\cal F}^X_{N-1} \ni F_N (x) = \frac{1}{N} \sum_{i=0}^{N-1} I(X_i \leq x) \stackrel{\mathbb P^\theta}\to  \hat F^\theta (x)=\mathbb E_{inv}^\theta I(X_0 \leq x),
$$
and also
$$
{\cal F}^X_{N} \!\ni \! F_N (x, x') \!=\! \frac{1}{N} \sum_{i=0}^{N-1} I(X_i \leq x, X_{i+1} \leq x') \!\stackrel{\mathbb P^\theta}\to\!  \hat F^\theta (x, x')\!=\!\mathbb E_{inv}^\theta I(X_0 \!\leq \!x, X_1 \!\leq \!x').
$$
The random variable $F_N (x, x')$ is $({\cal F}^X_{N}, {\cal B}({\mathbb R}^{2}))$-measurable for any pair $(x, x') \in {\mathbb R}^2$. 
According to lemma \ref{lem5}, the mapping $F_N (\cdot, \cdot)$ is $({\cal F}^X_N, \Sigma({\cal S}))$-measurable, hence, it is a random variable in the space of distribution functions. 

Now, according to lemma \ref{lem7} the following equality holds true, 
$$
\theta \stackrel{\mathbb P-\text{a.s.}}=G^{-1}(\,\underbrace{\hat F^\theta (\cdot, \cdot)}_{\in {\cal F}^X_\infty}\,). 
$$
By virtue of lemma \ref{lem4-4}, the mapping $G^{-1}$ is Borel and B-measurable. Therefore, 

$$
G^{-1}(\hat F^\theta (\cdot, \cdot)) \in  {\cal F}^X_\infty.
$$
Therefore,  by virtue of (\ref{thetaB}), 
\[
\hat\theta_n \to \mathbb E(\theta |{\cal F}^X_\infty) 
\stackrel{\mathbb P-\text{a.s.}}=\mathbb E(G^{-1}(\hat F^\theta (\cdot, \cdot))|{\cal F}^X_\infty) \stackrel{\mathbb P-\text{a.s.}} = G^{-1}(\hat F^\theta (\cdot, \cdot))
\stackrel{\mathbb P-\text{a.s.}} = \theta.
\]
This means that (\ref{thetatheta}) holds true.  implies the desired convergence 
(\ref{formula3}). 
Theorem \ref{thm1} is proved. 
\end{proof}

\section*{Acknowledgements}
For both authors this study is supported by the Russian Foundation for  Basic Research, grant 20-01-00575a. Theorem \ref{thm1} and lemma \ref{lem6} are established by the first author; lemmata \ref{lem5}  and \ref{lem7} are proved by the second author.
%


\end{document}